\newtheorem{lem}{Lemma}
\newtheorem{thm}{Theorem}
\newtheorem{prop}{Proposition}
\newcommand\blfootnote[1]{%
  \begingroup
  \renewcommand\thefootnote{}\footnote{#1}%
  \addtocounter{footnote}{-1}%
  \endgroup
}
\title[A formula for the Euler characteristic of a poset]{A formula for the Euler characteristic of a poset through the determinant of the order-complement matrix}
\author{Pedro J. Chocano and L. Felipe Prieto-Martínez}
\begin{document}

\maketitle

\begin{abstract}
Given a finite poset $P$, its zeta matrix $\mathbf Z$ encodes fundamental incidence-theoretic information about the order structure. In this paper we introduce and study the \emph{order-complement matrix} $\overline{\mathbf Z} = \mathbf J - \mathbf Z$, where $\mathbf J$ is the all-ones matrix. We prove a closed formula for its characteristic polynomial and for its determinant, showing that $\det(\overline{\mathbf Z}) = (-1)^{n+1} \tilde{\chi}(P)$,
where $n = |P|$ and $\tilde{\chi}(P)$ is the reduced Euler characteristic of $P$. This provides a new, unexpectedly simple linear-algebraic expression for the Euler characteristic of a poset, complementing existing determinant formulas for matrices derived from incidence relations.
\end{abstract}

\blfootnote{2020  Mathematics  Subject  Classification: 06A11, 06A07.}
\blfootnote{Keywords: reduced Euler characteristic, poset, determinant, characteristic polynomial.}
\blfootnote{This research is partially supported by Grant  PID2024-156663NB-I00 from Ministerio de Ciencia, Innovación y Universidades (Spain).}

\section{Introduction}

A \textbf{partially ordered set}, or \textbf{poset}, is a set $P$ equipped with a binary relation that is reflexive, antisymmetric, and transitive. This relation, usually denoted by $\leq$, provides a way to compare certain pairs of elements within the set, though not necessarily all of them.

In a poset $(P,\leq)$, a \textbf{chain} is a subset of elements $C=\{x_0,x_1,\ldots, x_k\}\subseteq P$ 
in which every pair of elements is comparable and arranged in a strictly increasing order, that is, 
\[
x_0 < x_1 < \cdots < x_k.
\]
\noindent The \textbf{length} of a chain is defined as the number of strict comparisons, that is, in the previous case, the length of $C$ is $k$.

The \textbf{Euler characteristic} $\chi(P)$ of a finite poset $P$, $|P|=n$, is defined as the alternating sum of the number of chains of each length in $P$:
$$\chi(P)=c_0-c_1+c_2-\ldots+(-1)^{n-1}c_{n-1}.$$

\noindent Equivalently, it can be computed as the alternating sum of the ranks of the homology groups of its order complex.  This invariant captures topological information about the poset and is preserved under order-theoretic equivalences such as homotopy equivalence of order complexes. The \textbf{reduced Euler characteristic} is defined as
\begin{equation}
\label{eq.reduced}\tilde{\chi}(P) = \chi(P) - 1.
\end{equation}

Given a finite poset $(P,\leq)$, $P=\{x_1,\ldots, x_n\}$, its \textbf{zeta matrix} is the $n\times n$ square matrix $\mathbf Z$ whose entries are defined by
$$z_{ij}=\begin{cases} 1 &\text{if }x_i\leq x_j \\ 0&\text{otherwise.}\end{cases}$$

\noindent This matrix is invertible.

Let us introduce another matrix which, up to our knowledge, is not so extended in the literature. We define the \textbf{order-complement matrix} $\overline{\mathbf Z}$ of the poset as 
\begin{equation} \label{eq.comp} \overline{\mathbf Z}=\mathbf J-\mathbf Z, \end{equation}

\noindent where $\mathbf J$ is the $n\times n$ matrix whose entries are all equal to 1 and $\mathbf Z$  is the zeta matrix. The entries in such matrix are:
\begin{equation}\label{eq.zbarra}\overline{z}_{ij}=\begin{cases} 0 &\text{if }x_i\leq x_j \\ 1&\text{otherwise.}\end{cases}\end{equation}

In this paper, we prove the following result:

\begin{thm} \label{thm.main} Let $(P,\leq)$ be a poset and let $\mathbf Z, \overline{\mathbf Z}$ be, respectively, its zeta matrix and its order-complement matrix.

\begin{itemize}
    \item[(a)] The characteristic polynomial of $\overline{\mathbf Z}$ is
\begin{equation}\label{eq.poly} p(\lambda)=(-1)^n (\lambda+1)^n - \sum_{k=0}^{n-1} (-1)^{k+n} c_k \, (\lambda+1)^{\,n-1-k},\end{equation}

\noindent where 
$$c_k=\mathbf 1^T(\mathbf Z-\mathbf I)^k\mathbf 1$$

\noindent is the number of chains of length $k$ and $\mathbf 1$ is the column vector whose entries are all equal to 1.

\item[(b)] Given a finite poset $(P,\leq)$, let  $\overline{\mathbf Z}$ be its order-complement matrix. Then 
$$\det\overline{\mathbf Z}=(-1)^{n+1} \tilde{\chi}(P).$$

\end{itemize}

\end{thm}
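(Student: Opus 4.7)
The plan is to prove part (a) by a direct linear-algebraic calculation and then derive part (b) as the special case $\lambda = 0$. The key observation driving everything is that, after choosing a linear extension of $\leq$ to order the elements, the zeta matrix $\mathbf Z$ is upper triangular with $1$'s on the diagonal, so $N := \mathbf Z - \mathbf I$ is strictly upper triangular and therefore nilpotent with $N^n = 0$. This nilpotence is what makes the series expansion used below terminate.

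For part (a), I would introduce $\mu = \lambda + 1$ and rewrite
$$p(\lambda) = \det(\overline{\mathbf Z} - \lambda \mathbf I) = (-1)^n \det\bigl((\mu \mathbf I + N) - \mathbf J\bigr).$$
Since $\mathbf J = \mathbf 1 \mathbf 1^T$ is a rank-one matrix, the matrix determinant lemma yields
$$\det\bigl((\mu \mathbf I + N) - \mathbf 1 \mathbf 1^T\bigr) = \det(\mu \mathbf I + N)\bigl(1 - \mathbf 1^T (\mu \mathbf I + N)^{-1} \mathbf 1\bigr),$$
where the leading determinant equals $\mu^n$ by triangularity, and the Neumann expansion
$$(\mu \mathbf I + N)^{-1} = \sum_{k=0}^{n-1} \frac{(-1)^k N^k}{\mu^{k+1}}$$
is finite thanks to the nilpotence of $N$. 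Using the identity $\mathbf 1^T N^k \mathbf 1 = c_k$, which follows since the $(i,j)$ entry of $N^k$ counts strict chains $x_i = y_0 < y_1 < \cdots < y_k = x_j$, one multiplies out and tracks the signs through the factor $(-1)^n$ to obtain exactly the formula \eqref{eq.poly}. Both sides are polynomials in $\mu$, so the identity obtained for $\mu \neq 0$ extends to all $\mu$.

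Part (b) is then an immediate corollary: evaluating at $\lambda = 0$ in \eqref{eq.poly} gives
$$\det(\overline{\mathbf Z}) = p(0) = (-1)^n\Bigl(1 - \sum_{k=0}^{n-1}(-1)^k c_k\Bigr) = (-1)^n\bigl(1 - \chi(P)\bigr) = (-1)^{n+1}\tilde{\chi}(P),$$
after invoking the definition \eqref{eq.reduced} of the reduced Euler characteristic.

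I do not anticipate a genuine conceptual obstacle. The only point requiring care is the sign bookkeeping through the substitution $\mu = \lambda + 1$, the matrix determinant lemma, and the alternating Neumann series; once the nilpotence of $\mathbf Z - \mathbf I$ is recognized, the algebra is essentially forced.
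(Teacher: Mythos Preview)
Your proposal is correct and follows essentially the same approach as the paper: a linear extension to make $\mathbf Z$ upper unitriangular, the matrix determinant lemma applied to the rank-one perturbation $\mathbf J=\mathbf 1\mathbf 1^T$, the finite Neumann expansion of $(\mu\mathbf I+N)^{-1}$ via nilpotence of $N=\mathbf Z-\mathbf I$, and the polynomial-identity argument to cover the excluded value. The only cosmetic difference is in part~(b): the paper substitutes $\lambda=0$ into the intermediate expression $(-1)^n(\lambda+1)^n\bigl(1-\mathbf 1^T\mathbf A^{-1}\mathbf 1\bigr)$ and then invokes $\chi(P)=\mathbf 1^T\mathbf Z^{-1}\mathbf 1$, whereas you substitute into the final formula and read off $\sum_{k}(-1)^k c_k=\chi(P)$ directly from the definition.
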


\section{Some basic results}

In this section, we present the main ingredients for the proof of Theorem \ref{thm.main}. All of them are well-known in the literature but, for the sake of completeness, we include their proof.

The first result is a consequence of the comment after the definition of \emph{zeta matrix} in \cite{BallantineFrechetteLittle2004}:

\begin{lem}\label{lem.1}
Let $(P,\leq)$ be a finite poset with zeta matrix $\mathbf Z$. 

\begin{itemize}

    \item[(a)] The characteristic polynomial of $\mathbf Z$ is $(\lambda-1)^n$, 1 is the only eigenvalue of $\mathbf Z$, and $\det(\mathbf Z) = 1$.
    \item[(b)]  tr$(\overline{\mathbf Z})=0$.
\end{itemize}
\noindent

\end{lem}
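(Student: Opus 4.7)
The plan for part (a) is to exploit a linear extension of the poset, that is, to choose a relabelling $y_1,y_2,\ldots,y_n$ of the elements of $P$ such that $y_i<y_j$ in $P$ implies $i<j$; such a relabelling exists for any finite poset (topological sort). Relabelling amounts to conjugating $\mathbf{Z}$ by a permutation matrix, which preserves the characteristic polynomial, the set of eigenvalues, and the determinant. In the relabelled order, $z_{ij}=1$ forces $y_i\leq y_j$, hence $i\leq j$, so the matrix becomes upper triangular. By reflexivity every diagonal entry is $1$, from which $p_{\mathbf{Z}}(\lambda)=(\lambda-1)^n$, the only eigenvalue is $1$, and $\det(\mathbf{Z})=1$ follow immediately.

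For part (b) I would just read the diagonal of $\overline{\mathbf{Z}}$: from \eqref{eq.zbarra}, $\overline{z}_{ii}=0$ for every $i$ because $x_i\leq x_i$. Summing over $i$ gives $\operatorname{tr}(\overline{\mathbf{Z}})=0$. Equivalently, $\operatorname{tr}(\mathbf{J})=n=\operatorname{tr}(\mathbf{Z})$ by part (a), so $\operatorname{tr}(\overline{\mathbf{Z}})=\operatorname{tr}(\mathbf{J})-\operatorname{tr}(\mathbf{Z})=0$.

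There is essentially no substantive obstacle here; the statement is an immediate consequence of the existence of linear extensions and of reflexivity. The only conceptual point worth underlining in the write-up is that the zeta matrix depends on the chosen enumeration of $P$, but its spectral invariants do not, which is exactly what legitimises the move to a linear extension.
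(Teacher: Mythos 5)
Your proposal is correct and follows essentially the same route as the paper: part (a) via a linear extension making $\mathbf Z$ upper unitriangular (you additionally make explicit the permutation-conjugation invariance, which the paper leaves implicit), and part (b) via the trace computation $\operatorname{tr}(\mathbf J)-\operatorname{tr}(\mathbf Z)=n-n=0$, or equivalently reading the zero diagonal of $\overline{\mathbf Z}$ from reflexivity.
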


\begin{proof} Any finite poset admits a \emph{linear extension}. A \textbf{linear extension} of a finite poset $(P,\leq)$ is a total order
$\preceq$ on the elements of $P$ that is compatible with the original
partial order, that is, whenever $x \leq y$ in the poset,
we also have $x \preceq y$.  Equivalently, a linear
extension is a listing $x_1,x_2,\dots,x_n$ of the elements of $P$ such
that $x_i \leq x_j$ in $P$ implies $i<j$. We refer the reader to \cite{S} for more information.

So we can assume that $\mathbf Z$ is upper unitriangular. From an upper triangular matrix, it is easy to obtain the characteristic polynomial, the eigenvalues and the determinant.

Finally, 
$$\text{tr}(\overline{\mathbf Z})=\text{tr}(\mathbf J)-\text{tr}(\mathbf Z)=n-n=0.$$
\end{proof}

Associated with a poset, we can also find the \textbf{Möbius matrix}, defined as the inverse of the zeta matrix. Its  entries form the Möbius function of the poset. For more information, we refer the reader to \cite{R,S}.

 Viewing $P$ as a category, the following result is the specialization of \emph{Leinster’s Formula} valid for any finite category with Möbius inversion (Section 2, after Definition~2.2 in \cite{Leinster2008}). Nevertheless, we will give our own proof of this result, based in the Neumann series expansion for the inverse:

\begin{lem}\label{lem.sum}
Let $P$ be a finite poset with Möbius matrix $\mathbf Z^{-1}$. Then the sum of all entries of the M\"obius matrix equals the Euler characteristic of $P$, that is,
\begin{equation} \label{eq.sum}
\chi(P)=\mathbf 1^T\mathbf Z^{-1}\mathbf 1,
\end{equation}

\noindent where $\mathbf 1$ is the column vector, whose entries are all equal to 1.
\end{lem}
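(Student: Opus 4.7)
The plan is to exploit the decomposition $\mathbf Z = \mathbf I + \mathbf N$, where $\mathbf N := \mathbf Z - \mathbf I$ is the \emph{strict} zeta matrix, whose $(i,j)$-entry is $1$ precisely when $x_i < x_j$. By Lemma~\ref{lem.1}, after choosing a linear extension of $P$, the matrix $\mathbf Z$ is upper unitriangular, so $\mathbf N$ is strictly upper triangular and therefore nilpotent with $\mathbf N^n = \mathbf 0$. This lets me invoke the Neumann series for the inverse in its terminating form
\[
\mathbf Z^{-1} = (\mathbf I + \mathbf N)^{-1} = \sum_{k=0}^{n-1} (-1)^k \mathbf N^k,
\]
with no convergence issues to worry about.

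The next step is the combinatorial interpretation of the powers of $\mathbf N$. A direct induction on $k$ shows that $(\mathbf N^k)_{ij}$ counts the number of strict chains $x_i = y_0 < y_1 < \cdots < y_k = x_j$ of length exactly $k$ starting at $x_i$ and ending at $x_j$. Summing this quantity over all pairs $(i,j)$ yields the total number of chains of length $k$ in $P$, that is,
\[
\mathbf 1^T \mathbf N^k \mathbf 1 = c_k.
\]
Note that for $k=0$ this recovers $\mathbf 1^T \mathbf I \mathbf 1 = n = c_0$, consistent with the convention that chains of length zero are single elements.

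Putting these ingredients together,
\[
\mathbf 1^T \mathbf Z^{-1} \mathbf 1 = \sum_{k=0}^{n-1} (-1)^k \mathbf 1^T \mathbf N^k \mathbf 1 = \sum_{k=0}^{n-1} (-1)^k c_k = \chi(P),
\]
which is the desired identity.

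The only mildly delicate point is the combinatorial identification $\mathbf 1^T (\mathbf Z - \mathbf I)^k \mathbf 1 = c_k$, but this is a standard incidence-algebra computation and moreover is the same fact that the statement of Theorem~\ref{thm.main}(a) already commits us to. Everything else is bookkeeping.
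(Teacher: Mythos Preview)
Your proof is correct and follows essentially the same route as the paper's own argument: write $\mathbf Z=\mathbf I+\mathbf N$ with $\mathbf N$ nilpotent (via a linear extension), expand $\mathbf Z^{-1}$ as a finite Neumann series, and identify $\mathbf 1^T\mathbf N^k\mathbf 1$ with the chain count $c_k$. The only cosmetic difference is that the paper writes the nilpotency index as some $m\le |P|$ rather than committing to $n$, and your closing paragraph about the ``mildly delicate point'' is commentary the paper omits.
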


\begin{proof}
We can assume, again, that
the zeta matrix $\mathbf Z$ is upper unitriangular.

Write $\mathbf Z=\mathbf I+\mathbf N$. Then $\mathbf N$ is strictly
upper triangular and therefore nilpotent: there exists $m\le |P|$ with $\mathbf N^m=\mathbf 0$. Since $\mathbf N$ is nilpotent, the inverse of $\mathbf I+\mathbf N$ is
given by the finite Neumann series:
\[
\mathbf Z^{-1}
= \sum_{k=0}^{m-1}(-\mathbf N)^k
= \mathbf I - \mathbf N + \mathbf N^2 - \cdots
  + (-1)^{m-1}\mathbf N^{m-1}.
\]

The entry in the position $(i,j)$ in $\mathbf N^k$ counts the number of strict chains of length $k$ with minimum $x_i$ and maximum $x_j$. Hence $\mathbf 1^T \mathbf N^k \mathbf 1$ is the total number
of strict chains in $P$ having $k$ inequalities, equivalently chains with
$k+1$ elements. Let $c_k$ denote this number. The Euler characteristic of $P$ is therefore
\[
\chi(P)=\mathbf 1^T\mathbf Z^{-1}\mathbf 1=\sum_{k\ge 0} (-1)^k c_k.
\]

\end{proof}

\section{Proof of Theorem \ref{thm.main}} \label{section.proof}

\subsection*{Proof of Statement (a)}

We can assume, again, that the zeta matrix $\mathbf Z$ is upper unitriangular.

Observe that $\mathbf J = \mathbf{1}\mathbf{1}^T$, where  $\mathbf{1}$ is the column vector of all ones. Let $\mathbf A := \lambda \mathbf I + \mathbf Z$.  For $\lambda\neq -1$, the matrix $\mathbf A$ is invertible and, in this case, from Equation \eqref{eq.comp} and the matrix determinant lemma, we obtain: 
\[
p(\lambda)=\det \big( \overline{\mathbf Z} -\lambda \mathbf{I})
= (-1)^n\det\big(\mathbf A - \mathbf{1}\mathbf{1}^T\big)
= (-1)^n \det\big(\mathbf A\big)\Big(1 - \mathbf{1}^T \mathbf A^{-1}\mathbf{1}\Big).
\]


\noindent From Lemma \ref{lem.1}, Statement (a), we know that $\det\big(\mathbf A\big)=(\lambda+1)^n.$ So 
\begin{equation} \label{eq.preneumann}
p(\lambda)
= (-1)^n(\lambda+1)^n\Big(1 - \mathbf{1}^T \mathbf A^{-1}\mathbf{1}\Big).
\end{equation}

Since $\mathbf{Z}$ is upper unitriangular, we can write $\mathbf{Z} = \mathbf{I} + \mathbf{N}$, where $\mathbf{N}$ is strictly upper triangular. Then
\[
\mathbf{A} = \lambda \mathbf{I} + \mathbf{Z} = (\lambda+1)\mathbf{I} + \mathbf{N}.
\]

\noindent For $\lambda\neq -1$, because $\mathbf{N}$ is nilpotent, we can use the Neumann series to compute the inverse:
\[
\mathbf{A}^{-1} = \frac{1}{\lambda+1} \left( \mathbf{I} + \frac{-\mathbf{N}}{\lambda+1} \right)^{-1} 
= \frac{1}{\lambda+1} \sum_{k=0}^{n-1} \left( -\frac{\mathbf{N}}{\lambda+1} \right)^k
= \sum_{k=0}^{n-1} \frac{(-1)^k \mathbf{N}^k}{(\lambda+1)^{k+1}}.
\]

Substituting the previous expression into Equation \eqref{eq.preneumann}, we prove Equation \eqref{eq.poly}, for $\lambda\neq-1$. Finally, since both sides of Equation \eqref{eq.poly} are polynomials and they coincide in $\mathbb R\setminus\{-1\}$, we conclude that the equality also holds for $\lambda=-1$.

\subsection*{Proof of Statement (b)}

The determinant of $\overline{\mathbf Z}$ can be obtained from the characteristic polynomial $p(\lambda)$ evaluated at $\lambda=0$, i.e., 
\[
\det(\overline{\mathbf Z}) = p(0).
\]

\noindent Substituting $\lambda = 0$ into $p(\lambda)$ in Equation \eqref{eq.preneumann}, we get
$$\det(\overline{\mathbf Z})
=(-1)^n\Big(1 - \mathbf{1}^T \mathbf Z^{-1}\mathbf{1}\Big)=(-1)^{n+1}\Big(\mathbf{1}^T \mathbf Z^{-1}\mathbf{1}-1\Big).$$

The result follows from the previous expression and Equations \eqref{eq.reduced} and \eqref{eq.sum}.

\section{Brief literature review}

To the best of our knowledge, no determinant formula involving the order-complement matrix has appeared before in the literature. Our result provides a new linear-algebraic characterization of the reduced Euler characteristic.

In the study of determinants of matrices naturally associated with posets, there is a classical and well-known work by Ballantine, Frechette, and Little, who consider the matrix  
\[
\mathfrak{Z} = \mathbf Z + \mathbf Z^T,
\]  
where \(\mathbf Z\) is the zeta matrix of a finite poset \(P\). They derive a combinatorial interpretation of the determinant of \(\mathfrak{Z}\) and provide a recursive formula in the case where \(P\) is a Boolean algebra \cite{BallantineFrechetteLittle2004}.


On a broader conceptual level, the notion of Euler characteristic has been extended well beyond posets and topological complexes to more abstract structures such as categories. In his influential paper \cite{Leinster2008}, Leinster defines the Euler characteristic of a finite category by generalizing Rota’s Möbius inversion from posets to categories; his definition works when the category admits both a weighting and a coweighting. He then proves that this new invariant behaves in a manner entirely compatible with classical invariants (e.g., under colimits), thus situating the Euler characteristic in a very general categorical framework. Beyond Leinster’s combinatorial definition, there are further developments exploring alternative approaches: for instance, Berger and Leinster reinterpret the Euler characteristic of a category as the “summation” of a (formally divergent) alternating series; under suitable interpretations, this divergent series can be assigned a finite value that agrees with Leinster’s characteristic~\cite{BergerLeinster2008}.

In the realm of posets but still in relation to the Euler characteristic, Noguchi studies the zeta series (a generating function related to the zeta function) under iterative barycentric subdivision~\cite{Noguchi2016}. While that work is focused on analytic properties, asymptotic behavior, and zeros of the zeta series rather than on determinants of matrices like \(\mathbf J - \mathbf Z\), it nonetheless shows the richness of connections between zeta functions, poset structure, and topological invariants.

It is worth noting that the first appearance of the so-called order-complement matrix in the literature—though not under this terminology—occurs in \cite{Chocano2025preprint}.  In that paper, the author introduced this matrix independently of the works cited above and showed, among other topological and combinatorial aspects, that its determinant and rank are homotopy and simple homotopy invariants. The author also raised the question of whether the absolute value of the determinant equals the absolute value of the reduced Euler characteristic, a question mentioned in the introduction following a referee’s observation. Later, in \cite{Chocano2026}, the same author analyzes eigenvalues of quadratic forms related to this matrix and their connections with the topological properties of posets.

\section{Future Work}

Let $(P,\leq)$ be a finite poset. The spectrum of the zeta matrix $\mathbf Z$ of the poset does not contain any information (see Lemma \ref{lem.1}, Statement (a)). But the determinant formula proved here suggests that the spectrum of the order-complement matrix $\overline{\mathbf Z}$ may encode additional invariants of the poset. In particular, it would be natural to investigate how the distribution of eigenvalues of $\overline{\mathbf Z}$ reflects combinatorial properties of $P$, or whether specific families of posets give rise to recognizable spectral patterns. Understanding these eigenvalues could lead to spectral characterizations of classical order-theoretic invariants and provide new tools for comparing posets via linear-algebraic data.

Just as a first step in this direction, let us include the following:

\begin{prop} Let $(P,\leq)$ be a finite poset and let $\overline{\mathbf Z}$ be its order-complement matrix. Then:
\begin{itemize}

    \item[(a)] The sum of the eigenvalues of $\overline{\mathbf Z}$ is always 0.
\item[(b)] Suppose that $(P,\leq)$ has a maximum (equiv. a minimum) $x$. Then 0 is an eigenvalue of $\overline{\mathbf Z}$. 

Moreover, if $\overline{\mathbf Z}'$ is the order-complement matrix corresponding to the poset $(P\setminus\{x\},\leq)$, then the spectra of $\mathbf Z,\overline{\mathbf Z}'$ are related by
$$\sigma(\overline{\mathbf Z}')=\sigma\{\overline{\mathbf Z}\}\cup\{0\}.$$

\item[(c)] If $(P,\leq)$ is a totally ordered set, then the only eigenvalue of $\overline{\mathbf Z}$ is 0.

\end{itemize}
    
\end{prop}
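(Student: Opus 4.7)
The plan is to handle the three items of the proposition in sequence, each being quite short. Statement (a) is immediate from Lemma~\ref{lem.1}(b): the trace of any square matrix equals the sum of its eigenvalues counted with algebraic multiplicity, and $\mathrm{tr}(\overline{\mathbf Z})=0$.

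For Statement (b), the first step is a direct inspection of Equation~\eqref{eq.zbarra}. If $x\in P$ is a maximum, every $y\in P$ satisfies $y\le x$, so the column of $\overline{\mathbf Z}$ indexed by $x$ consists entirely of zeros; symmetrically, if $x$ is a minimum, the corresponding row of $\overline{\mathbf Z}$ vanishes. In either case $\overline{\mathbf Z}$ is singular, so $0\in\sigma(\overline{\mathbf Z})$. To obtain the relation between spectra, I would choose a linear extension that lists the maximum $x$ last, which gives the block decomposition
\[
\overline{\mathbf Z}=\begin{pmatrix}\overline{\mathbf Z}' & \mathbf 0 \\ \mathbf 1^T & 0\end{pmatrix},
\]
where the upper-left block is precisely the order-complement matrix of $P\setminus\{x\}$, the zero column encodes that every element lies below $x$, and the last row (ones off the diagonal) reflects that $x$ lies above no other element. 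Applying the block determinant formula gives $\det(\lambda \mathbf I-\overline{\mathbf Z})=\lambda\cdot\det(\lambda \mathbf I-\overline{\mathbf Z}')$, so the characteristic polynomials differ by a factor of $\lambda$, yielding $\sigma(\overline{\mathbf Z})=\sigma(\overline{\mathbf Z}')\cup\{0\}$ (with multiplicity). The argument for a minimum is entirely analogous, with the element placed first instead and the zero row playing the role of the zero column.

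Statement (c) then reduces to an observation: listing the chain in its natural increasing order makes $\mathbf Z$ the upper triangular matrix of all ones (on and above the diagonal), so $\overline{\mathbf Z}=\mathbf J-\mathbf Z$ is strictly lower triangular with ones strictly below the diagonal. A strictly triangular matrix is nilpotent, hence has $0$ as its only eigenvalue.

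I do not anticipate any real obstacle: each item is essentially a one-line argument once the elements of $P$ are ordered via a suitable linear extension (in particular, placing the maximum last in the proof of (b)). The only step deserving careful verification is the block structure in (b), to confirm that the upper-left block is literally $\overline{\mathbf Z}'$ and not merely similar to it; this is where having explicitly reserved the last index for $x$ pays off.
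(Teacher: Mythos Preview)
Your argument is correct. For parts (a) and (b) you follow essentially the same route as the paper: (a) via $\mathrm{tr}(\overline{\mathbf Z})=0$ from Lemma~\ref{lem.1}(b), and (b) via the vanishing column (resp.\ row) forced by Equation~\eqref{eq.zbarra}, with the block-triangular determinant formula being exactly the Laplace expansion the paper invokes. (One small wording slip: the all-ones last row reflects that $x$ lies \emph{below} no other element, not ``above''; the computation itself is unaffected. You also state the spectral relation in the correct direction $\sigma(\overline{\mathbf Z})=\sigma(\overline{\mathbf Z}')\cup\{0\}$, which is what the block factorisation actually gives; the displayed equation in the statement has the two sides swapped.)

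For part (c) your approach differs from the paper's. The paper deduces (c) by induction on $|P|$, peeling off the maximum at each step via (b). You instead observe directly that, after listing a chain in increasing order, $\overline{\mathbf Z}$ is strictly lower triangular and hence nilpotent. This is shorter and self-contained (it does not need (b)); the inductive argument, on the other hand, illustrates how (b) can be used as a reduction tool and would generalise to posets obtained by successively adjoining maxima or minima.
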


\begin{proof}

A) is a consequence of Lemma \ref{lem.1} (Statement (b)). In turn, Statement (b) follows from Equation \eqref{eq.zbarra} and the Laplace expansion for the determinant. Finally, Statement (c) easily follows from induction in the size of $P$ (using Statement (b)).
    
\end{proof}

\textbf{Acknowledgments.}
We are grateful to Professor Martin Rubey for carefully reading our manuscript and pointing out two errors. His observations allowed us to correct these mistakes and improve the quality of this work.

\end{document}